\theoremstyle{definition}
\newtheorem{mydef}{Definition}[section]
\newtheorem{lem}[mydef]{Lemma}
\newtheorem{thm}[mydef]{Theorem}
\newtheorem{cor}[mydef]{Corollary}
\newtheorem{hypothesis}[mydef]{Hypothesis}
\newtheorem{defin}[mydef]{Definition}
\newtheorem{remark}[mydef]{Remark}
\newtheorem{fact}[mydef]{Fact}
\newcommand{\ba}{\bar{a}}
\newcommand{\Ksatpp}[2]{{#1}^{#2\text{-sat}}}
\newcommand{\Ksatp}[1]{\Ksatpp{\K}{#1}}
\newcommand{\sea}{\mathfrak{C}}
\newcommand{\ran}[1]{\text{ran}(#1)}
\newcommand{\cf}[1]{\text{cf} (#1)}
\newcommand{\seq}[1]{\langle #1 \rangle}
\newcommand{\rest}{\upharpoonright}
\newcommand{\is}{\mathfrak{i}}
\newcommand{\K}{\mathcal{K}}
\def\lea{\le}
\def\gea{\ge}
\newbox\noforkbox \newdimen\forklinewidth
\noforkbox\hbox{\lower 2pt\box1\lower
2pt\box0\relax}
\def\unionstick{\mathop{\copy\noforkbox}\limits}
\newcommand{\nf}{\unionstick}
\newbox\doesforkbox
\doesforkbox\hbox{\lower 0pt\box1 \lower
2pt\box2\lower2pt\box0\relax}
\def\nunionstick{\mathop{\copy\doesforkbox}\limits}
\newcommand{\nnf}{\nunionstick}
\newcommand{\nfs}[4]{#2 \nf_{#1}^{#4} #3}
\newcommand{\nnfs}[4]{#2 \nnf_{#1}^{#4} #3}
\def\1nf{\unionstick^{(1)}}
\def\2nf{\unionstick^{(2)}}
\def\3nf{\unionstick^{(3)}}
\newcommand{\gtp}{\text{gtp}}
\newcommand{\gS}{\text{gS}}
\newcommand{\hanfe}[1]{\beth_{\left(2^{#1}\right)^+}}
\newcommand{\LS}{\text{LS}}
\title[Primes in fully good AECs]{Building prime models in fully good abstract elementary classes}
\date{\today\\
AMS 2010 Subject Classification: Primary 03C48. Secondary: 03C45, 03C52, 03C55, 03C75, 03E55.}
\keywords{Abstract elementary classes; Categoricity; Independence; Forking; Classification theory; Prime models; Saturated models}
\author{Sebastien Vasey}
\email{sebv@cmu.edu}
\urladdr{http://math.cmu.edu/\textasciitilde svasey/}
\address{Department of Mathematical Sciences, Carnegie Mellon University, Pittsburgh, Pennsylvania, USA}
\thanks{This material is based upon work done while the author was supported by the Swiss National Science Foundation under Grant No.\ 155136.}
\begin{document}

\begin{abstract} 

  We show how to build primes models in classes of saturated models of abstract elementary classes (AECs) having a well-behaved independence relation:

  \begin{thm}\label{abstract-thm-0}
    Let $\K$ be an almost fully good AEC that is categorical in $\LS (\K)$ and has the $\LS (\K)$-existence property for domination triples. For any $\lambda > \LS (\K)$, the class of Galois saturated models of $\K$ of size $\lambda$ has prime models over every set of the form $M \cup \{a\}$.
  \end{thm}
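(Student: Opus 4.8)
The plan is to pass to the class $\K^* := \Ksat$ of $\lambda$-saturated models of size $\lambda$ and prove the statement there. Since $\K$ is almost fully good and categorical in $\LS (\K)$, the saturated models are cofinal and $\K^*$ inherits from $\K$ an independence relation satisfying the good-frame axioms (existence, extension, uniqueness, symmetry, local character and continuity); categoricity in $\LS (\K)$ is what guarantees that this restriction is again well behaved and that $\K^*$ is nonempty and sufficiently homogeneous. The theorem then amounts to showing that, given $M \in \K^*$ and an element $a$, there is $N \in \K^*$ with $M \le N$ and $a \in N$ which $\K$-embeds over $M \cup \{a\}$ into any $N' \in \K^*$ with $M \le N'$ and $a \in N'$.

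The first substantial step is to lift the $\LS (\K)$-existence property for domination triples up to level $\lambda$ over saturated bases: for $M \in \K^*$ and $a$ I want a domination triple $(M, a, N)$ with $N \in \K^*$. I would build such an $N$ as the union of an increasing continuous chain $\langle N_i : i < \lambda \rangle$ obtained from a resolution of $M$ together with elements supplied, at successor stages, by the existence property at $\LS (\K)$; domination is preserved under the relevant directed unions by continuity of the independence relation, and saturating the union---a nonforking extension, which preserves the domination triple---produces the required $N \in \K^*$.

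It then remains to prove that any such domination triple is prime: given $N' \in \K^*$ extending $M$ with $a \in N'$, I would construct a $\K$-embedding $f \colon N \to N'$ fixing $M \cup \{a\}$ by transfinite recursion along the filtration $\langle N_i : i < \lambda \rangle$ witnessing the construction of $N$. The point of domination is that each new piece added at stage $i$ is dominated over a small base contained in $M \cup \{a\}$, so that its Galois type over the (size-$\lambda$) image $f_i(N_i)$ is the nonforking extension of a type over a set of size $< \lambda$; the $\lambda$-saturation of $N'$ realizes that small type, and uniqueness of nonforking extensions in $\K^*$ forces the realizations to be compatible, so the approximations $f_i$ cohere into $f$.

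The main obstacle is precisely this last compatibility, together with the lifting that precedes it: I must arrange the domination triples coherently along the construction so that every element is dominated over a \emph{fixed small} base sitting inside $M \cup \{a\}$ rather than merely over the ever-growing $N_i$, for only then does $\lambda$-saturation of $N'$ suffice to realize the successive types and does uniqueness of nonforking glue the partial embeddings together. Establishing the individual domination triples is comparatively routine given the existence property; it is the bookkeeping that keeps the bases small and the nonforking witnesses canonical---so that the limit map is well defined and an embedding---that I expect to be the delicate part.
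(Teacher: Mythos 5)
Your overall architecture matches the paper's: decompose the prime model along a chain of small models carrying domination triples, then build the embedding into $N'$ by transfinite recursion, using domination to transfer nonforking from $a$ to the chain, uniqueness of nonforking amalgamation (the paper's Fact \ref{uq-nf}) to extend partial embeddings, and $\lambda$-saturation of $N'$ to realize types over sets of size $<\lambda$. Your closing-off of limit stages via continuity is also exactly the paper's Lemma \ref{uq-cont}. But there is a genuine gap at the step you yourself flag as ``bookkeeping'': the successor stage of the chain construction. The $\LS(\K)$-existence property, applied to $\gtp(a/N_{i+1}^0)$, produces \emph{some} domination triple $(a'', N_{i+1}^0, N'')$ realizing that type in some extension; nothing makes $N''$ contain the previously built $N_i^1$, and domination triples are not preserved upward (if $(a,M,N)$ is a domination triple and $N \lea N''$, there is no reason $(a,M,N'')$ is one --- this also undercuts your claim that passing to a saturated ``nonforking extension'' of the union preserves domination). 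In the first-order setting one patches this by catching one's tail, using that forking is witnessed by formulas; here only the $\LS(\K)$-witness property is available, and the paper explicitly explains (in the proof of Fact \ref{jarden-club}) why the naive catch-up argument and even full model continuity cannot be applied directly, since the relevant models are not $\lea$-comparable.

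The paper's resolution of this difficulty is substantially different from, and harder than, what you propose. In Lemma \ref{canonical-prime} it does \emph{not} force domination at every stage: it builds a chain of length $\lambda$ (not $\cf{\lambda}$) in which odd stages take limit/universal extensions (this is what makes the union saturated for free, via uniqueness of limit models, Fact \ref{uq-lim-fact}) and even stages \emph{witness failures} of domination whenever they occur. It then invokes Jarden's theorem (Fact \ref{jarden-club} --- itself resting on tameness, conjugation, and the equivalence of domination and uniqueness triples, hence on your hypothesis of the $\LS(\K)$-existence property) to get that $\nfs{N_i^0}{N_{\mu^+}^0}{N_i^1}{N_{\mu^+}^1}$ holds on a club of $i$; combined with the even stages this shows domination triples occur on a club, closed by Lemma \ref{uq-cont}, and one finally extracts a cofinal subsequence of length $\cf{\lambda}$ of small models. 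So the part you defer as delicate is precisely the paper's main technical content, and your proposed mechanism for it (existence property at successors plus continuity at limits) does not work as stated.
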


  This generalizes an argument of Shelah, who proved the result when $\lambda$ is a successor cardinal.
\end{abstract}

\maketitle

\tableofcontents

\section{Introduction}

Prime models (over sets) are a crucial ingredient in the proof of Morley's categoricity theorem \cite{morley-cip}. Morley's construction gives a \emph{primary model}: a model whose universe can be enumerated so that the type of each element is isolated over the previous ones. This construction can be generalized to certain non-elementary context such as homogeneous model theory \cite{sh3} and even finitary abstract elementary classes \cite{finitary-aec}.

In general abstract elementary classes (AECs), it seems that the construction breaks down due to the lack of even rudimentary compactness: it is not clear how to define a usable generalization of the notion of an isolated type. Shelah \cite[Section III.4]{shelahaecbook} works around this difficulty by assuming that the class satisfies an axiomatization of superstable forking for its models of size $\lambda$ (in Shelah's terminology, $\K$ has a successful good $\lambda$-frame) and uses \emph{domination} to build for every saturated $M$ of size $\lambda^+$ and every element $a$ a saturated model $N$ containing $M \cup \{a\}$ and prime in the class of saturated models of $\K$ size $\lambda^+$. Here, saturation is defined in terms of Galois (orbital) types.

Shelah shows \cite[Chapter II]{shelahaecbook} that the assumption of existence of a successful good $\lambda$-frame follows from strong \emph{local} hypotheses: categoricity in $\lambda$, $\lambda^+$, a medium number of models in $\lambda^{++}$, and set-theoretic hypotheses such as $2^\lambda < 2^{\lambda^+} < 2^{\lambda^{++}}$. In \cite{ss-tame-jsl, indep-aec-v6-toappear}, we showed that successful good frames can also be built assuming that the class satisfies \emph{global hypotheses}: amalgamation, categoricity in some high-enough cardinal, and a locality property called full tameness and shortness. It is known that amalgamation and the locality property both follow from categoricity and a large cardinal axiom \cite{makkaishelah, tamelc-jsl}. The global hypotheses actually enable us to build the global generalization of a successful good $\lambda$-frame: what we call an almost fully good independence relation (see Definition \ref{fully-good-def}). In this paper, we show that Shelah's construction of prime models generalizes to this global setup and $\lambda^+$ can be replaced by a limit cardinal. Thus we obtain a general construction of primes (in an appropriate class of saturated models) that works assuming only the existence of a well-behaved independence notion (this is Theorem \ref{abstract-thm-0} from the abstract).

Recently, \cite[Theorem 0.2]{ap-universal-v9} showed that assuming the global hypotheses above, existence of primes over \emph{every} set of the form $M \cup \{a\}$ implies categoricity on a tail of cardinals. Unfortunately, we cannot use the construction of prime models of this paper to deduce a new categoricity transfer in the global framework: the catch is that we only get existence of primes in the subclass of saturated models of $\K$: Given $M$ and $a$ with $M$ saturated, we obtain $N$ saturated that is prime over $M \cup \{a\}$ \emph{in the class of saturated models}. That is (roughly\footnote{the precise statement uses Galois types, see Definition \ref{prime-def}.}), if $N'$ contains $M \cup \{a\}$ \emph{and is saturated}, then there exists a $\K$-embedding $f: N \rightarrow N'$ fixing $M$ and $a$. We cannot conclude anything if $M$ is not saturated (even if we know it is $\lambda$-saturated for some $\lambda < \|M\|$). This contrasts sharply with the more uniform results from the first-order context (in a totally transcendental theory, a prime model exists over every set) and finitary AECs \cite[Lemma 5.4]{categ-finit} (in a simple $\aleph_0$-stable finitary AEC with the extension property, an $f$-primary model exists over every set).

We can however use the construction of this paper to obtain that in the global framework, categoricity on a tail of cardinals implies the existence of primes. This gives a converse to \cite[Theorem 0.2]{ap-universal-v9} (we asked if such a converse was true in \cite[Conjecture 5.22]{ap-universal-v9}). A full proof will appear elsewhere (in the final version of one of \cite{ap-universal-v9, categ-primes-v3, downward-categ-tame-v4}), but we state the result as an additional motivation:

\begin{cor}
  Let $\K$ be a fully $\LS (\K)$-tame and short AEC with amalgamation. Write $H_1 := \hanfe{\LS (\K)}$. Assume that $\K$ is categorical in \emph{some} cardinal $\lambda \ge H_1$. The following are equivalent:

  \begin{enumerate}
  \item $\K$ is categorical in \emph{all} $\lambda' \ge H_1$.
  \item $\K_{\ge H_1}$ has primes over all sets of the form $M \cup \{a\}$.
  \end{enumerate}
\end{cor}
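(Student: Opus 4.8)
The plan is to prove the two implications separately. The implication (2) $\Rightarrow$ (1) is essentially already available, while (1) $\Rightarrow$ (2) is where the construction of the present paper does the work, and I expect the latter to contain all the difficulty. For (2) $\Rightarrow$ (1): assuming $\K_{\ge H_1}$ has primes over every set of the form $M \cup \{a\}$, \cite[Theorem 0.2]{ap-universal-v9} already yields categoricity on a tail of cardinals, say in every $\lambda' \ge \mu$ for some $\mu \ge H_1$. It then remains only to push categoricity down to the threshold $H_1$, and for this I would invoke the downward categoricity transfer available under full tameness and shortness with amalgamation \cite{downward-categ-tame-v4}: categoricity in a single cardinal $\lambda' \ge H_1$ propagates downward to all of $[H_1, \lambda']$. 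Applying this to a member of the tail and combining the resulting interval with $[\mu, \infty)$ covers all of $[H_1, \infty)$, which is (1).

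For (1) $\Rightarrow$ (2), the crucial point is an observation that dissolves the one real limitation of Theorem \ref{abstract-thm-0}, namely that it only produces primes inside the class of \emph{saturated} models. Assuming $\K$ is categorical in every $\lambda' \ge H_1$, I would first argue that \emph{every} model in $\K_{\ge H_1}$ is saturated: categoricity together with amalgamation and tameness gives stability in each cardinal $\ge H_1$, hence a saturated model of every such size, and by categoricity this saturated model is the unique model of its size. Consequently the phrase ``prime in the class of saturated models'' coincides with ``prime in $\K_{\ge H_1}$'', and the obstruction flagged in the introduction disappears precisely because categoricity has been assumed.

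It then remains to feed $\K_{\ge H_1}$ into Theorem \ref{abstract-thm-0}. Viewing $\K_{\ge H_1}$ as an AEC with Löwenheim--Skolem number $H_1$, the global independence construction of \cite{ss-tame-jsl, indep-aec-v6-toappear} equips it with an almost fully good independence relation, and categoricity in $H_1$ supplies the required categoricity in the Löwenheim--Skolem number. The one hypothesis of Theorem \ref{abstract-thm-0} that is not immediate is the existence property for domination triples at the base cardinal $H_1$, and I expect verifying this to be the main obstacle: it is the point at which one must exhibit, over a base model of size $H_1$, an extension dominated by a prescribed element, and this is the technical heart linking the independence calculus to the prime-model construction. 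Granting it, Theorem \ref{abstract-thm-0} delivers primes over $M \cup \{a\}$ for every $M$ of size $\lambda > H_1$; the boundary case $\|M\| = H_1$ is handled directly, since there $M \cup \{a\}$ already has size $H_1$ and its prime model is the unique (saturated) model of that size. This establishes primes over all $M \cup \{a\}$ in $\K_{\ge H_1}$, giving (2).
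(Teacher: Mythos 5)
A preliminary caveat: the paper never actually proves this corollary. It is stated purely as motivation, and the text explicitly defers the full proof to the final versions of \cite{ap-universal-v9, categ-primes-v3, downward-categ-tame-v4}. So your proposal can only be compared with the route sketched in the introduction, and at that level your architecture is exactly the intended one: (2) $\Rightarrow$ (1) from \cite[Theorem 0.2]{ap-universal-v9} (primes give categoricity on a tail) plus a downward transfer, and (1) $\Rightarrow$ (2) from Theorem \ref{abstract-thm-0} after observing that categoricity in every $\lambda' \ge H_1$ forces all models of $\K_{\ge H_1}$ to be saturated, so that ``prime among saturated models'' and ``prime'' coincide.

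Within that architecture, however, there are genuine gaps. The clearest one is your boundary case $\|M\| = H_1$. Theorem \ref{abstract-thm-0}, applied to $\K_{\ge H_1}$ viewed as an AEC with L\"owenheim--Skolem number $H_1$, only produces primes in cardinalities $\lambda > H_1$; this restriction is essential, since Lemma \ref{canonical-prime} resolves the base by models of size strictly below $\lambda$ on which the frame still operates. Your patch --- that for $\|M\| = H_1$ the prime model ``is the unique (saturated) model of that size'' --- is a non sequitur: uniqueness up to isomorphism gives \emph{some} isomorphism, but primeness (Definition \ref{prime-def}) demands an embedding fixing $M$ pointwise and sending $a$ to the prescribed $a'$, and categoricity supplies no such map. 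Covering $\|M\| = H_1$ requires the almost fully good machinery to live at a threshold \emph{strictly below} $H_1$, and whether the constructions of \cite{ss-tame-jsl, indep-aec-v6-toappear} (with the improvements of \cite{ap-universal-v9}) can be pushed below $H_1$ is precisely the threshold bookkeeping that caused the author to defer the proof. Second, you misplace the main obstacle: the existence property for domination triples is not something to be verified by hand --- Fact \ref{existence-dom-triples} gives the $\lambda$-existence property on $\Ksatp{\lambda}$ for free for every $\lambda$ strictly above the L\"owenheim--Skolem number of the almost fully good class, as the paper itself points out right after Hypothesis \ref{good-hyp}; the real issue is again only where that class can be made to start. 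Finally, two smaller glosses: for singular $\lambda'$, ``stability in every cardinal, hence a saturated model of every size'' needs the limit-model/union-of-chains machinery (Fact \ref{uq-lim-fact}), not stability alone; and the downward transfer you invoke in (2) $\Rightarrow$ (1) at threshold exactly $H_1$ (rather than, say, $\hanfe{2^{\LS(\K)}}$) was itself part of the then-unpublished content of \cite{downward-categ-tame-v4}, not an off-the-shelf citation.
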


The background required to read this paper is a basic knowledge of AECs (for example Chapters 4-12 of Baldwin's book \cite{baldwinbook09}). Some familiarity with good frames and their generalizations (in particular the beginning of \cite{indep-aec-v6-toappear}, \cite[Section 11]{indep-aec-v6-toappear}, and Shelah's construction of primes \cite[Section III.4]{shelahaecbook}) would be helpful but we state all the necessary definitions here. We rely on a few results from \cite{jarden-tameness-apal, indep-aec-v6-toappear, vv-symmetry-transfer-v3} but they are used as black boxes: little understanding of the material there is needed.

This paper was written while working on a Ph.D.\ thesis under the direction of Rami Grossberg at Carnegie Mellon University and I would like to thank Professor Grossberg for his guidance and assistance in my research in general and in this work specifically. I also thank a referee for suggestions that helped refocus the topic of the paper and improve its presentation. 

\section{Background}

We give some background on independence that will be used in the next section. We assume familiarity with the basics of AECs as laid out in e.g.\ \cite{baldwinbook09} or the forthcoming \cite{grossbergbook}. We will use the notation from the preliminaries of \cite{sv-infinitary-stability-afml}. All throughout this section, we assume:

\begin{hypothesis}\label{ap-hyp}
  $\K$ is an AEC with amalgamation.
\end{hypothesis}

This will mostly be assumed throughout the paper (Hypothesis \ref{good-hyp} implicitly implies it by Definition \ref{fully-good-def}.(\ref{ap})). Note however that assuming high-enough categoricity and a large cardinal axiom, it will hold on a tail \cite{tamelc-jsl}.

We will work use a global forking-like independence notion that has the basic properties of forking in a superstable first-order theory. This is a stronger notion than Shelah's good frame \cite[Chapter II]{shelahaecbook} because in good frames forking is only defined for types of length one. We invite the reader to consult \cite{indep-aec-v6-toappear} for more explanations and motivations on global and local independence notions.

\begin{defin}[Definition 8.1 in \cite{indep-aec-v6-toappear} and Definition A.2 in \cite{ap-universal-v9}]\label{fully-good-def} \
  $\is = (\K, \nf)$ is an \emph{almost fully good independence relation} if:

  \begin{enumerate}
  \item $\K$ is an AEC satisfying the following structural assumptions:
    \begin{enumerate}
    \item $\K_{<\LS (K)} = \emptyset$ and $\K \neq \emptyset$.
    \item\label{ap} $\K$ has amalgamation, joint embedding, and no maximal models.
    \item $\K$ is stable in all cardinals.
    \end{enumerate}
  \item \begin{enumerate}
    \item $\is$ is a $(<\infty, \ge \LS (K))$-independence relation (see \cite[Definition 3.6]{indep-aec-v6-toappear}). That is, $\nf$ is a relation on quadruples $(M, A, B, N)$ with $M \lea N$ and $A, B \subseteq |N|$ satisfying invariance, monotonicity, and normality. We write $\nfs{M}{A}{B}{N}$ instead of $\nf (M, A, B, N)$, and we also say $\gtp (\ba / B; N)$ does not fork over $M$ for $\nfs{M}{\ran{\ba}}{B}{N}$.
    \item $\is$ has base monotonicity, disjointness ($\nfs{M}{A}{B}{N}$ implies $A \cap B \subseteq |M|$), symmetry, uniqueness (whenever $M \lea N$ and $p, q \in \gS^{<\infty} (N)$ do not fork over $M$ and are such that $p \rest M = q \rest M$, then $p = q$), and the local character properties:
      \begin{enumerate}
        \item If $p \in \gS^\alpha (M)$, there exists $M_0 \lea M$ with $\|M_0\| \le |\alpha| + \LS (K)$ such that $p$ does not fork over $M_0$.
        \item If $\seq{M_i : i \le \delta}$ is increasing continuous, $p \in \gS^\alpha (M_\delta)$ and $\cf{\delta} > \alpha$, then there exists $i < \delta$ such that $p$ does not fork over $M_i$.
      \end{enumerate}
    \item $\is$ has the following weakening of the extension property: for any $M \lea N$ and any $p \in \gS^\alpha (M)$, there exists $q \in \gS^\alpha (N)$ that extends $p$ and does not fork over $M$ provided at least one of the following conditions hold:

  \begin{enumerate}
    \item $M$ is saturated.
    \item $\|M\| = \LS (\K)$.
    \item $\alpha \le \LS (\K)$.
  \end{enumerate}
    \item\label{witness-prop} $\is$ has the left and right $(\le \LS (K))$-witness properties: $\nfs{M}{A}{B}{N}$ if and only if for all $A_0 \subseteq A$ and $B_0 \subseteq B$ with $|A_0| + |B_0| \le \LS (K)$, we have that $\nfs{M}{A_0}{B_0}{N}$.
    \item\label{full-model-cont} $\is$ has full model continuity: For all limit ordinals $\delta$, if for $\ell < 4$, $\seq{M_i^\ell : i \le \delta}$ are increasing continuous such that for all $i < \delta$, $M_i^0 \lea M_i^\ell \lea M_i^3$ for $\ell = 1,2$ and $\nfs{M_i^0}{M_i^1}{M_i^2}{M_i^3}$, then $\nfs{M_\delta^0}{M_\delta^1}{M_\delta^2}{M_\delta^3}$.
  \end{enumerate}
  \end{enumerate}

  We say that $\K$ is \emph{almost fully good} if there exists $\nf$ such that $(K, \nf)$ is almost fully good\footnote{the relation $\nf$ is in fact unique \cite{bgkv-apal}.}.
\end{defin}
\begin{remark}\label{almost-fully-good-existence}
  In \cite[Theorem 15.1]{indep-aec-v6-toappear}, it was shown that AECs with amalgamation that satisfy a locality condition (full tameness and shortness) and are categorical in a high-enough cardinal are (on a tail) almost fully good. The threshold cardinals were improved in \cite[Appendix A]{ap-universal-v9}. We use the name ``almost'' fully good because we do not assume the full extension property, only the weakening above. The problem is that it is not known how to get the full extension property with the aforementioned hypotheses (see the discussion in Section 15 of \cite{indep-aec-v6-toappear}). 
\end{remark}

We will use (in the proof of Fact \ref{jarden-club}) that almost fully good AECs are tame (a locality property for types introduced by Grossberg and VanDieren \cite{tamenessone}). Recall that $\K$ is \emph{$\mu$-tame} (for $\mu \ge \LS (\K)$) if for any distinct $p, q \in \gS (M)$ there exists $M_0 \in \K_{\le \LS (\K)}$ with $M_0 \lea M$ such that $p \rest M_0 \neq q \rest M_0$. Using local character and uniqueness (see e.g.\ \cite[p.~15]{superior-aec}) we have:

\begin{fact}\label{fully-good-tame}
  If $\K$ is almost fully good, then $\K$ is $\LS (\K)$-tame.
\end{fact}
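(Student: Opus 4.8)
The plan is to prove the contrapositive of the tameness condition using the frame axioms in essentially the order suggested by the citation to \cite{superior-aec}. Fix $M \in \K$ and distinct $p, q \in \gS (M)$; the goal is to produce a witness $M_0 \lea M$ with $\|M_0\| \le \LS (\K)$ such that $p \rest M_0 \neq q \rest M_0$. Since the types in the definition of tameness have length one, I will be able to apply the clause of local character with $\alpha = 1$, for which the bound $|\alpha| + \LS (\K)$ collapses to $\LS (\K)$.

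First I would invoke the first local character property to each type separately: there is $M_p \lea M$ with $\|M_p\| \le \LS (\K)$ over which $p$ does not fork, and likewise $M_q \lea M$ with $\|M_q\| \le \LS (\K)$ over which $q$ does not fork. These are in general two different bases, so the next step is to merge them. Using the Löwenheim--Skolem--Tarski axiom of AECs applied to the set $|M_p| \cup |M_q|$ (which has size $\le \LS (\K)$), I would find a single $M_0 \lea M$ with $|M_p| \cup |M_q| \subseteq |M_0|$ and $\|M_0\| \le \LS (\K)$, so that in particular $M_p \lea M_0$ and $M_q \lea M_0$. By base monotonicity, non-forking of $p$ over $M_p$ (resp.\ of $q$ over $M_q$) transfers upward to the larger base $M_0$, so both $p$ and $q$ do not fork over $M_0$.

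Finally I would argue by contradiction. Suppose $p \rest M_0 = q \rest M_0$. Then $M_0 \lea M$, the types $p, q \in \gS^{<\infty} (M)$ both fail to fork over $M_0$, and they agree on $M_0$; by the uniqueness property this forces $p = q$, contradicting our choice of distinct $p, q$. Hence $p \rest M_0 \neq q \rest M_0$, and $M_0$ is the desired witness, establishing $\LS (\K)$-tameness. I expect the only point requiring any care to be the middle step: combining the two local-character witnesses $M_p$ and $M_q$ into a single \emph{small} base over which both types remain non-forking. This simultaneously uses the Löwenheim--Skolem--Tarski axiom (to keep $\|M_0\| \le \LS (\K)$) and base monotonicity (to preserve non-forking after enlarging the base). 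The remainder is a routine concatenation of the independence axioms, with no genuinely hard step.
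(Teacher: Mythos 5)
Your proof is correct and is exactly the argument the paper has in mind: the paper gives no written proof beyond the remark that the fact follows ``using local character and uniqueness'' (with a citation), and your write-up fills in precisely those steps --- local character with $\alpha = 1$ to get small bases $M_p, M_q$, the L\"owenheim--Skolem--Tarski axiom (together with coherence) plus base monotonicity to merge them into a single $M_0 \in \K_{\le \LS(\K)}$ over which both types do not fork, and uniqueness to conclude $p \rest M_0 \neq q \rest M_0$. The only ingredient beyond the paper's two named properties is base monotonicity, which is indeed needed for the merging step and is part of Definition \ref{fully-good-def}, so there is no gap.
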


We will also make use of limit models (see \cite{gvv-toappear-v1_2} for history and motivation). We will use a global definition, where we permit the limit model and the base to have different sizes. This extra generality is used: in (\ref{clause-4}) in Lemma \ref{canonical-prime}, $M_i^\ell$ and $M_{i + 1}^\ell$ may have different sizes.

\begin{defin}\label{limit-def}
  Let $M_0 \lea M$ be models in $\K_{\ge \LS (\K)}$. $M$ is \emph{limit over $M_0$} if there exists a limit ordinal $\delta$ and a strictly increasing continuous sequence $\seq{N_i : i \le \delta}$ such that $N_0 = M_0$, $N_\delta = M$, and for all $i < \delta$, $N_{i + 1}$ is universal over $N_i$.

  We say that $M$ is \emph{limit} if it is limit over some $M' \lea M$.
\end{defin}

We will use the following notation to describe classes of saturated models:

\begin{defin}
  For $\lambda > \LS (\K)$, $\Ksatp{\lambda}$ is the class of $\lambda$-saturated (according to Galois types) models in $\K_{\ge \lambda}$. We order $\Ksatp{\lambda}$ with the strong substructure relation induced from $\K$.
\end{defin}

In an almost fully good AEC, classes of $\lambda$-saturated models are well-behaved and limit models are saturated. This is a combination of results of Shelah \cite[Chapter II]{shelahaecbook} and VanDieren \cite{vandieren-chainsat-apal}, and  is key in the construction of prime models of the next section.

\begin{fact}\label{uq-lim-fact}
  Assume that $\K$ is almost fully good. Then:

  \begin{enumerate}
  \item\label{uq-lim} If $M, N \in \K$ are limit and $\|M\| = \|N\|$, then $M \cong N$. In particular (if $\|M\| > \LS (\K)$), $M$ and $N$ are saturated.
  \item\label{chainsat} For any $\lambda > \LS (\K)$, $\Ksatp{\lambda}$ is an AEC with $\LS (\Ksatp{\lambda}) = \lambda$. Moreover, $\Ksatp{\lambda}$ is almost fully good.
  \end{enumerate}
\end{fact}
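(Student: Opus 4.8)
The plan is to derive both parts from the symmetry of $\is$ (included in Definition \ref{fully-good-def}) together with the superstable local character, feeding these into the limit-model machinery of Shelah and VanDieren.

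For (\ref{uq-lim}), I would first dispose of the easy case. If $M$ and $N$ are limit witnessed by chains $\seq{M_i : i \le \delta}$ and $\seq{N_j : j \le \delta'}$ with $\cf{\delta} = \cf{\delta'}$, then a routine back-and-forth, matching the two chains using that each successor step is universal over its predecessor, produces an isomorphism $M \cong N$; this uses only universality and not forking. The substantive content is the cross-cofinality case $\cf{\delta} \neq \cf{\delta'}$: one shows that every $(\lambda, \delta)$-limit model is isomorphic to a $(\lambda, \omega)$-limit model, so that all limit models of a fixed size coincide up to isomorphism. This is exactly VanDieren's uniqueness-of-limit-models theorem, whose hypothesis is the symmetry of nonforking; since $\is$ has symmetry, the theorem applies. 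I expect this to be the \emph{main obstacle}: without symmetry the cross-cofinality isomorphism can genuinely fail, and reducing symmetry to a back-and-forth that respects independence is the crux of VanDieren's argument. For the final clause, I would use that $\K$ is stable in all cardinals (a structural assumption in Definition \ref{fully-good-def}) to produce, for each $\lambda > \LS(\K)$, a saturated model of size $\lambda$; such a model is itself limit, so by the uniqueness just established every limit model of size $\lambda$ is isomorphic to it and hence saturated.

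For (\ref{chainsat}), showing that $\Ksatp{\lambda}$ is an AEC reduces to verifying the Tarski--Vaught chain axiom, since closure under isomorphism, coherence, and finite character of $\lea$ are inherited from $\K$ without difficulty. The chain axiom demands that a $\lea$-increasing union of $\lambda$-saturated models be $\lambda$-saturated; this is precisely VanDieren's theorem that chains of saturated models are saturated \cite{vandieren-chainsat-apal}, which again runs on symmetry and is the second place where that hypothesis is essential. To compute $\LS(\Ksatp{\lambda}) = \lambda$, the lower bound is immediate as every member has size $\ge \lambda$, and the upper bound follows by using stability of $\K$ to build, inside any $M \in \Ksatp{\lambda}$ and over any $A \subseteq |M|$ with $|A| \le \lambda$, a $\lambda$-saturated $N \lea M$ of size $\lambda$ containing $A$: resolve the construction into a continuous chain of length $\lambda$, realizing the required types inside $M$ at each step, where stability in $\lambda$ keeps the number of types (and hence the size) at $\lambda$.

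Finally, to see that $\Ksatp{\lambda}$ is almost fully good, I would restrict $\nf$ to quadruples of $\lambda$-saturated models and check the axioms of Definition \ref{fully-good-def} one at a time. Invariance, monotonicity, normality, base monotonicity, disjointness, symmetry, uniqueness, the two local-character clauses, the witness property, and full model continuity transfer essentially verbatim, since each is a statement preserved on passing to a subcollection of models closed under the relevant chains, and $\Ksatp{\lambda}$ is so closed by the chain result above. The structural assumptions (stability in all cardinals, amalgamation, no maximal models) descend from the corresponding properties of $\K$. The one delicate point is the weak extension property: the minimal members of $\Ksatp{\lambda}$, those of size $\lambda$, are $\lambda$-saturated of size $\lambda$ and hence saturated, so the saturation alternative in the extension hypothesis is available exactly where the new Löwenheim--Skolem number $\LS(\Ksatp{\lambda}) = \lambda$ requires it, and the three alternatives together cover every needed case. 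Thus both genuinely hard inputs are the same: the symmetry of $\is$ powers VanDieren's two theorems, and once those are in hand the remaining verifications are bookkeeping.
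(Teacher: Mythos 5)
Your plan for part~(\ref{chainsat}) is essentially the paper's: the paper also reduces the AEC axioms to the chain property and quotes VanDieren's union-of-saturated-models theorem from \cite{vandieren-chainsat-apal}, and it too dismisses the ``moreover'' part as routine. For part~(\ref{uq-lim}) you take a genuinely different route: the paper restricts $\is$ to types of length one and models of size $\lambda$ to obtain a good $\lambda$-frame and then quotes Shelah's uniqueness of limit models inside a good frame (\cite[Lemma II.4.8]{shelahaecbook}), whereas you quote VanDieren's symmetry-based uniqueness theorem. Your route can be made to work, but be aware that VanDieren's hypotheses are $\mu$-superstability and symmetry for $\mu$-\emph{nonsplitting}, not symmetry of $\nf$; the translation (nonforking implies nonsplitting, and forking symmetry transfers to $\mu$-symmetry) is precisely what \cite{vv-symmetry-transfer-v3} provides, and it is not free. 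Two further points you skip: since Definition~\ref{limit-def} allows the base of a limit model to have smaller cardinality, one first needs a reduction (the paper cites \cite[Proposition 3.1]{vv-symmetry-transfer-v3}) to limit models over bases of size $\lambda$ --- your same-cofinality back-and-forth does not handle, say, one chain of models of size $\lambda$ against one of models of size $<\lambda$; and VanDieren's chain theorem is a statement about $\mu^+$-saturated models with uniqueness of limit models in $\mu^+$ among its hypotheses, so for limit $\lambda$ the paper first proves the successor case and then uses that $\lambda$-saturated equals $\mu^+$-saturated for all $\mu < \lambda$, a (short) step your direct application omits.

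The genuine gap is your derivation of the ``in particular'' clause of (\ref{uq-lim}). You propose to use stability to produce a saturated model of size $\lambda$, note that it is limit, and conclude by uniqueness. For regular $\lambda$ this is fine, but for singular $\lambda$ the existence of a saturated model of cardinality $\lambda$ is not a routine consequence of stability: in a chain of length $\cf{\lambda} < \lambda$ of increasingly saturated models, a subset of size $<\lambda$ need not be contained in any single member, so the union is not obviously saturated, and in this framework existence of saturated models in singular cardinals is normally \emph{deduced from} the fact you are proving (or from part~(\ref{chainsat}), which the paper proves afterwards, using part~(\ref{uq-lim})). So as written your argument is circular. The non-circular argument runs the other way: for each $\mu < \lambda$, stability in $\lambda$ lets one build a limit model of size $\lambda$ over a chain of length $\mu^+$; such a model is $\mu^+$-saturated, since any set of size $\le \mu$ lies in some stage of the chain and types over a stage are realized in the next stage by universality (plus amalgamation). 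By the uniqueness you have just established, the given limit model $M$ is isomorphic to this one, hence $\mu^+$-saturated; as $\mu < \lambda$ was arbitrary, $M$ is saturated. With this rearrangement (and the verifications above) your proposal becomes a correct, if more transfer-heavy, alternative to the paper's proof.
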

\begin{proof}
  Let $\is$ be an almost fully good independence relation on $\K$. 
  \begin{enumerate}
  \item Let $M, N \in \K$ be limit models with $\lambda := \|M\| = \|N\|$. First, by a back and forth argument we can assume that $M$ is limit over some $M_0 \in \K_\lambda$ and $N$ is limit over some $N_0 \in \K_\lambda$ (see \cite[Proposition 3.1]{vv-symmetry-transfer-v3}). Next, note that the restriction of $\is$ to types of length one and models of size $\lambda$ induces a good $\lambda$-frame (see \cite[Chapter II]{shelahaecbook}). The result now follows from \cite[Lemma II.4.8]{shelahaecbook} (see \cite[Theorem 9.2]{ext-frame-jml} for a detailed proof).
  \item We prove the result when $\lambda$ is a successor cardinal, and the result for $\lambda$ limit easily follows (the moreover part is also straightforward to check). So assume that $\lambda = \mu^+$. Note that $\K$ is $\mu$-superstable in the sense of \cite[Definition 5]{vandieren-chainsat-apal} (because nonforking implies nonsplitting, see the proof of \cite[Fact 4.8.(2)]{vv-symmetry-transfer-v3}). Similarly, $\K$ is $\mu^+$-superstable. By the first part, limit models of cardinality $\mu^+$ are unique. Therefore we can apply \cite[Theorem 22]{vandieren-chainsat-apal} which tells us that the union of an increasing chain of $\mu^+$-saturated models is $\mu^+$-saturated. That $\LS (\Ksatp{\mu^+}) = \mu^+$ follows from stability and the other axioms of an AEC are straightforward to check.
  \end{enumerate}
\end{proof}
\begin{remark}
  Fact \ref{uq-lim-fact}.(\ref{chainsat}) is an improvement on the threshold cardinal in \cite{bv-sat-v3} (where it is shown that that $\Ksatp{\lambda}$ is an AEC for all $\lambda \ge \hanfe{\LS (\K)}$).
\end{remark}

Domination will be the notion replacing isolation in this paper's construction of prime models:

\begin{defin}\label{dom-def}
  Let $\is = (\K, \nf)$ be an almost fully good independence relation. $(a, M, N)$ is a \emph{domination triple} if $M \lea N$, $a \in |N| \backslash |M|$, and for any $N' \gea N$ and any $B \subseteq |N'|$, if $\nfs{M}{a}{B}{N'}$, then $\nfs{M}{N}{B}{N'}$.
\end{defin}
\begin{remark}\label{dom-uq-triple}
  This is a variation on Shelah's uniqueness triples \cite[Definition II.5.3]{shelahaecbook}. In fact by \cite[Lemma 11.7]{indep-aec-v6-toappear}, uniqueness triples and domination triples coincide in our framework (this will be used in the proof of Fact \ref{jarden-club}, although an understanding of the exact definition of uniqueness triples is not needed for this paper).
\end{remark}

A key property is the existence property for domination triples\footnote{This is analogous to Shelah's definition of a \emph{weakly successful} good $\lambda$-frame \cite[Definition III.1.1]{shelahaecbook} which means the frame has the existence property for uniqueness triples.}:

\begin{defin}\label{exist-triple-def}
  Let $\is = (\K, \nf)$ be an almost fully good independence relation and let $\lambda \ge \LS (\K)$. We say that \emph{$\is$ has the $\lambda$-existence property for domination triples} if for every $M \in \K_\lambda$ and every nonalgebraic $p \in \gS (M)$, there exists a domination triple $(a, M, N)$ so that $p = \gtp (a / M; N)$.
\end{defin}

The existence property for domination triples is a reasonable hypothesis: if the independence relation does not have it, we can restrict ourselves to a subclass of saturated models (see the moreover part of Fact \ref{uq-lim-fact}.(\ref{chainsat})).

\begin{fact}[Lemma 11.12 in \cite{indep-aec-v6-toappear}]\label{existence-dom-triples}
  Let $\is = (\K, \nf)$ be an almost fully good independence relation. For every $\lambda > \LS (\K)$, $\is \rest \Ksatp{\lambda}$ (the restriction of $\is$ to $\lambda$-saturated models) has the $\lambda$-existence property for domination triples.
\end{fact}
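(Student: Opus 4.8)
The plan is to first pass to a cleaner setting and then build the triple by a chain construction over $M \cup \{a\}$ whose key feature is forced by the very definition of domination. By Fact \ref{uq-lim-fact}.(\ref{chainsat}), $\is \rest \Ksatp{\lambda}$ is again almost fully good with $\LS (\Ksatp{\lambda}) = \lambda$ and every one of its models $\lambda$-saturated, so it suffices to work inside this class and establish its $\lambda$-existence property for domination triples: fix a member $M$ of size $\lambda$ (equivalently a saturated model of $\K$ of size $\lambda$) and a nonalgebraic $p \in \gS (M)$, choose $a \models p$, and produce a $\lambda$-saturated $N \gea M$ with $a \in |N| \setminus |M|$ and $\gtp (a / M; N) = p$ such that $(a, M, N)$ is a domination triple. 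Since $\|M\| = \LS (\Ksatp{\lambda})$, the relevant instance of the extension property in Definition \ref{fully-good-def} is available from the base $M$, so types over $M$ have nonforking extensions to larger members throughout the construction.

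The construction is guided by a necessary condition. If $(a, M, N)$ is a domination triple and $b \in |N| \setminus |M|$, then taking $B = \{b\}$ and $N' = N$ in Definition \ref{dom-def}, disjointness forbids $\nfs{M}{N}{b}{N}$, so we must have $\neg \, \nfs{M}{a}{b}{N}$; by symmetry this says that $\gtp (b / M \cup \{a\}; N)$ forks over $M$. Thus $N$ may contain \emph{no} element independent from $a$ over $M$. I would therefore build $N = \bigcup_{i \lea \lambda} N_i$ as an increasing continuous chain of models of size $\lambda$ with $M \cup \{a\} \subseteq |N_0|$, closing off so that $N$ becomes $\lambda$-saturated while, at every stage, realizing Galois types only by elements whose type over $M \cup \{a\}$ forks over $M$.

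To verify domination, suppose $N' \gea N$ and $B \subseteq |N'|$ satisfy $\nfs{M}{a}{B}{N'}$; I would prove $\nfs{M}{N_i}{B}{N'}$ by induction on $i$. The $(\lea \lambda)$-witness property (Definition \ref{fully-good-def}.(\ref{witness-prop})) lets me assume $|B| \lea \lambda$, and symmetry rewrites the goal as $\nfs{M}{B}{N_i}{N'}$. At successors, transitivity reduces the step to checking that the material added at stage $i+1$ is independent from $B$ over $N_i$, which is arranged by the construction together with the hypothesis $\nfs{M}{a}{B}{N'}$; at limits I would apply full model continuity (Definition \ref{fully-good-def}.(\ref{full-model-cont})) to the chains $M_i^0 = M$, $M_i^1 = N_i$, $M_i^2 = B^\ast$ a fixed model with $M \lea B^\ast \lea N'$ and $B \subseteq |B^\ast|$, and $M_i^3 = N'$. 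The same instance of full model continuity, applied to $\seq{N_i : i \lea \lambda}$, then yields $\nfs{M}{N}{B}{N'}$, which is exactly the required domination.

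The main obstacle is the tension exposed by the necessary condition above: $N$ must simultaneously be $\lambda$-saturated yet omit every element independent from $a$ over $M$. Showing that one can realize enough types to saturate $N$ using only such $a$-dependent elements is precisely where superstability enters: the local character clauses and the extension property over the \emph{saturated} base $M$ guarantee that every Galois type over a small submodel admits a realization whose type over $M \cup \{a\}$ forks over $M$, so the closing-off can be carried out without ever adding an independent element. A more economical packaging of the same content is to invoke Remark \ref{dom-uq-triple}: domination triples coincide with Shelah's uniqueness triples, and the global uniqueness together with the extension property of $\is \rest \Ksatp{\lambda}$ supply directly the uniqueness of amalgamation defining a uniqueness triple, bypassing the set-theoretic density arguments of \cite[Chapter III]{shelahaecbook}.
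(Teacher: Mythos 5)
The first thing to note is that the paper does not prove this statement at all: it is imported as a black box (Lemma 11.12 of \cite{indep-aec-v6-toappear}), so your proposal has to stand entirely on its own — and it has a genuine gap. The necessary condition you isolate is correct: by disjointness, in a domination triple $(a,M,N)$ every $b \in |N| \setminus |M|$ must satisfy $\nnfs{M}{a}{b}{N}$. But your construction maintains \emph{only} this pointwise invariant, and it is far from sufficient. The entire weight of the verification rests on the successor step, where you assert that the material added at stage $i+1$ is independent from $B$ over $N_i$ and that this ``is arranged by the construction together with the hypothesis $\nfs{M}{a}{B}{N'}$'' — but nothing in the construction arranges it: knowing that each new element forks with $a$ over $M$ gives no information about its relation to an arbitrary $B$ that is independent from $a$. ``Forks with $a$ over $M$'' is much weaker than ``is dominated by $a$ over $M$'', and the latter is what the step needs, circularly. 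Concretely, in the first-order theory of two cross-cutting equivalence relations $E_1, E_2$ (every $E_1$-class meets every $E_2$-class infinitely), take $a$ in a new $E_1$- and new $E_2$-class over a saturated $M$, and let $b$ be $E_1$-equivalent to $a$ but lying in an $E_2$-class represented neither in $M$ nor elsewhere in the construction. Then $b$ forks with $a$ over $M$, so your closing-off procedure is permitted to add it; yet any $c$ with $c \mathrel{E_2} b$ in a fresh $E_1$-class satisfies $\nfs{M}{a}{c}{\sea}$ while $\nnfs{M}{b}{c}{\sea}$, so domination is irreparably destroyed. Worse, once $b$ is present, the type ``$x \mathrel{E_2} b$ and $x$ in a new $E_1$-class'' over a small submodel containing $b$ has \emph{no} realization forking with $a$ over $M$, so your claim that every Galois type over a small submodel admits such a realization is false, and saturation cannot be achieved while keeping your invariant. (There is also a smaller technical issue: your limit-stage use of full model continuity needs $\nfs{M}{N_i}{B^\ast}{N'}$ with the \emph{model} $B^\ast$ on the right, whereas the induction only supplies it for the set $B$; upgrading requires extension, which is restricted in the ``almost'' fully good setting.)

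Your closing suggestion — invoke Remark \ref{dom-uq-triple} and get uniqueness triples ``directly'' from uniqueness plus extension — is also circular: that remark gives the \emph{equivalence} of domination triples with uniqueness triples, not the \emph{existence} of either, and existence (Shelah's weak successfulness) is precisely the substantive content of the statement; if it followed directly from uniqueness and extension, every good frame would be weakly successful. For comparison, the actual argument behind the cited Lemma 11.12 (mirrored in the even stages of the proof of Lemma \ref{canonical-prime} in this paper) does not try to maintain a sufficient invariant at all. It builds increasing chains $\seq{M_i}$, $\seq{N_i}$ in which each failure of domination of $(a, M_i, N_i)$ is witnessed and the witness is \emph{swallowed} into the next stage, so that $\nnfs{M_i}{N_i}{M_{i+1}}{N_{i+1}}$; local character and full model continuity (or, in this paper, the club of Fact \ref{jarden-club}) then force stabilization, so at some stage $(a, M_i, N_i)$ must already be a domination triple, and one finishes by conjugating $M_i$ back to $M$ via uniqueness of saturated models and of nonforking extensions. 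That stabilization-by-contradiction strategy is the missing idea that replaces the invariant you were trying to maintain.
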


Finally, we recall the definition of prime models in the framework of abstract elementary classes. This does not need amalgamation and is due to Shelah \cite[Section III.3]{shelahaecbook}. While it is possible to define what it means for a model to be prime over an arbitrary set (see \cite[Definition 5.1]{ap-universal-v9}), here we focus on primes over sets of the form $M \cup \{a\}$. The technical point in the definition is that since we are not working inside a monster model, how $M \cup \{a\}$ is embedded matters. Thus we use a formulation in terms of Galois types: instead of saying that $N$ is prime over $M \cup \{a\}$, we say that $(a, M, N)$ is a \emph{prime triple}:

\begin{defin}\label{prime-def}
  Let $\K$ be an AEC (not necessarily with amalgamation). 

  \begin{enumerate}
    \item A \emph{prime triple} is $(a, M, N)$ such that $M \lea N$, $a \in |N| \backslash |M|$ and for every $N' \in \K$, $a' \in |N'|$ such that $\gtp (a / M; N) = \gtp (a' / M; N')$, there exists $f: N \xrightarrow[M]{} N'$ so that $f (a) = a'$.
    \item We say that $\K$ \emph{has primes} if for $M \in \K$ and every nonalgebraic $p \in \gS (M)$, there exists a prime triple representing $p$, i.e.\ there exists a prime triple $(a, M, N)$ so that $p = \gtp (a / M; N)$.
    \item We define localizations such as ``$\K_\lambda$ has primes'' or ``$\Ksatp{\lambda}_\lambda$ has primes'' in the natural way (in the second case, we ask that all models in the definition be saturated).
  \end{enumerate}
\end{defin}

\section{Building primes over saturated models}

We show that in almost fully good AECs, there exists primes among the saturated models (see Definition \ref{prime-def}). For models of successor size, this is shown in \cite[Claim III.4.9]{shelahaecbook} (or in \cite{jarden-prime} with slightly weaker hypotheses). We generalizes Shelah's proof to limit sizes here. This is the core of the paper. Throughout this section, we assume:

\begin{hypothesis}\label{good-hyp} \
  \begin{enumerate}
  \item $\K$ is an almost fully good AEC, as witnessed by $\is = (\K, \nf)$.
  \item $\K$ is categorical in $\LS (\K)$.
  \item\label{existence-prop-hyp} $\is$ has the $\LS (\K)$-existence property for domination triples (see Definition \ref{exist-triple-def}).
  \end{enumerate}
\end{hypothesis}

We consider theses hypotheses reasonable: Remark \ref{almost-fully-good-existence} gives conditions under which an AEC is almost fully good and Fact \ref{existence-dom-triples} shows that we can then restrict it to a subclass of saturated models to obtain the existence property for domination triples and categoricity in $\LS (\K)$.

Note that Hypothesis \ref{good-hyp}.(\ref{existence-prop-hyp}) is used in the proof of Fact \ref{jarden-club}. We do not know whether it follows from the other two hypotheses.

We start by showing that domination triples are closed under unions. This is a key consequence of full model continuity.

\begin{lem}\label{uq-cont}
  Let $\seq{M_i : i < \delta}, \seq{N_i : i < \delta}$ be increasing and assume that $(a, M_i, N_i)$ are domination triples for all $i < \delta$. Then $(a, \bigcup_{i < \delta} M_i, \bigcup_{i < \delta} N_i)$ is a domination triple.
\end{lem}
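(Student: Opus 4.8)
The plan is to verify the definition of a domination triple for the union directly, combining the levelwise domination via full model continuity. Write $M_\delta := \bigcup_{i<\delta} M_i$ and $N_\delta := \bigcup_{i<\delta} N_i$. The coherence axioms of an AEC give $M_\delta \lea N_\delta$ (each $M_i \lea N_i \lea N_\delta$, so $M_\delta \lea N_\delta$), and since $a \in |N_i| \setminus |M_i|$ for every $i$ we get $a \in |N_\delta| \setminus |M_\delta|$. So I fix $N' \gea N_\delta$ and $B \subseteq |N'|$ with $\nfs{M_\delta}{a}{B}{N'}$, and must show $\nfs{M_\delta}{N_\delta}{B}{N'}$.

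The key first step is a \emph{base reduction}: I claim $\nfs{M_i}{a}{B}{N'}$ holds for all large enough $i < \delta$. Since $\gtp(a / M_\delta; N_\delta)$ is a type of length one, the chain local character property applies to it along a continuous refinement of $\seq{M_i : i < \delta}$ — the needed hypothesis $\cf{\delta} > 1$ is automatic — so there is $i_0 < \delta$ with $\nfs{M_{i_0}}{a}{M_\delta}{N_\delta}$, hence $\nfs{M_{i_0}}{a}{M_\delta}{N'}$ by monotonicity. Combining this with the hypothesis $\nfs{M_\delta}{a}{B}{N'}$ (normalized to $\nfs{M_\delta}{a}{M_\delta \cup B}{N'}$) via transitivity, a standard consequence of the axioms, yields $\nfs{M_{i_0}}{a}{M_\delta \cup B}{N'}$; then base monotonicity (raising the base from $M_{i_0}$ to any $M_i$ with $i_0 \le i$, legitimate since $|M_i| \subseteq |M_{i_0}| \cup |M_\delta|$) gives $\nfs{M_i}{a}{M_\delta \cup B}{N'}$, whence $\nfs{M_i}{a}{B}{N'}$ for all $i \ge i_0$. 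The point of routing through the \emph{length-one} type of $a$ is that it makes this reduction succeed for every cofinality of $\delta$; this is the crux, since lowering the base is otherwise exactly the forbidden direction.

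With the base reduced, I feed the levelwise domination into full model continuity. After reindexing so that $\nfs{M_i}{a}{B}{N'}$ for all $i < \delta$, I build an increasing continuous chain of models $\seq{C_i : i \le \delta}$ with $M_i \lea C_i \lea N'$, $\bigcup_{i<\delta} C_i \supseteq B$, and $\nfs{M_i}{a}{C_i}{N'}$ for each $i$; applying the definition of the domination triple $(a, M_i, N_i)$ then upgrades this to $\nfs{M_i}{N_i}{C_i}{N'}$. Full model continuity, with $M_i^0 = M_i$, $M_i^1 = N_i$, $M_i^2 = C_i$, and $M_i^3 = N'$ constant, now delivers $\nfs{M_\delta}{N_\delta}{C_\delta}{N'}$, and monotonicity (shrinking the right side from $C_\delta \supseteq B$ to $B$) gives the desired $\nfs{M_\delta}{N_\delta}{B}{N'}$.

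The step I expect to be the main obstacle is the construction of the enclosing chain $\seq{C_i}$: full model continuity requires genuine \emph{models} in its second and third coordinates, whereas domination is phrased for an arbitrary \emph{set} $B$, so I must thicken $B$ to models $C_i$ without destroying its independence from $a$. The tools are the $(\le \LS(\K))$-witness property together with the extension property, which — crucially — is available in its weak form precisely because $a$ has length one (clause (c), $\alpha \le \LS(\K)$, of the extension hypothesis); this lets me enclose $M_i$ together with an assigned portion of $B$ in a model over which $a$ remains nonforking, verifying independence on the small subsets furnished by the witness property and gluing coherently up the chain. Care is needed so that the $C_i$ stay increasing and continuous and that every element of $B$ is eventually absorbed; once this bookkeeping is arranged, the three ingredients — base reduction, levelwise domination, and full model continuity — combine to finish.
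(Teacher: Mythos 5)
Your overall skeleton --- reduce the base via chain local character and transitivity, invoke the levelwise domination, and finish with full model continuity plus monotonicity --- is exactly the paper's argument. The difference, and the genuine gap, is in how the right-hand side $B$ gets turned into a model. You propose to build an increasing continuous chain $\seq{C_i : i \le \delta}$ of models \emph{inside the given} $N'$ with $M_i \lea C_i \lea N'$, $B \subseteq |C_\delta|$, and $\nfs{M_i}{a}{C_i}{N'}$. No axiom of the framework lets you do this. The weak extension property produces a nonforking extension $q \in \gS (C)$ of $\gtp (a / M_i)$ as an abstract type, realized by some \emph{new} element in some \emph{extension} of the ambient model; it gives no control over the actual type $\gtp (a / C; N')$ of the fixed element $a$ over a submodel $C \lea N'$. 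To transfer from $q$ to $a$ you would need to already know that $\gtp (a / C; N')$ does not fork over $M_i$ (so that uniqueness identifies the two types) --- which is precisely what you are trying to arrange --- and moving a realization of $q$ onto $a$ by an isomorphism would in general move $C$ outside $N'$. The witness property does not help either: it reduces checking independence to small subsets of $C_i$, but the elements of $C_i \setminus (M_i \cup B)$ forced on you by closing into a model are arbitrary elements of $N'$, and nothing makes them independent from $a$ over $M_i$. Since $N'$ is an arbitrary extension of $N_\delta$ (not assumed saturated or homogeneous in any way), there may simply be no submodel of $N'$ containing $M_i \cup B$ over which $a$'s type does not fork.

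The paper's fix is to do the thickening once, at the top, and \emph{outside} $N'$: work in a monster model $\sea$ (equivalently, use invariance and ambient monotonicity) and argue that without loss of generality $B$ is itself a model $N$ with $M_\delta \lea N$. Concretely, by the extension property for types of length one (clause (c), as you note), uniqueness, and homogeneity of $\sea$, one finds a model $N$ containing $M_\delta \cup B$ in an extension of $N'$ with $\nfs{M_\delta}{a}{N}{\sea}$; proving $\nfs{M_\delta}{N_\delta}{N}{\sea}$ then yields $\nfs{M_\delta}{N_\delta}{B}{N'}$ by monotonicity. After this reduction your chain $\seq{C_i}$ can be taken constant equal to $N$, and your base reduction, the levelwise domination, and full model continuity (with third coordinate constantly $N$) go through verbatim. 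Two minor points you share with the paper: the chains $\seq{M_i}, \seq{N_i}$ are only assumed increasing, so one must continuize them (handling limit stages by induction on $\delta$) before applying full model continuity; and the transitivity you invoke, though not listed in Definition \ref{fully-good-def}, is indeed available in the underlying framework.
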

\begin{proof}
  For ease of notation, we work inside a monster model $\sea$ and write $A \nf_M B$ for $\nfs{M}{A}{B}{\sea}$. Let $M_\delta := \bigcup_{i < \delta} M_i$, $N_\delta := \bigcup_{i < \delta} N_i$. Assume that $a \nf_{M_\delta} N$ with $M_\delta \lea N$ (by extension for types of length one, we can assume this without loss of generality). By local character, for all sufficiently large $i < \delta$, $a \nf_{M_i} N$. By definition of domination triples, $N_i \nf_{M_i} N$. By full model continuity, $N_\delta \nf_{M_\delta} N$.
\end{proof}

The conclusion of the next fact is a key step in Shelah's construction of a successor frame in \cite[Chapter II]{shelahaecbook}. The fact says that if $M^0 \lea M^1$ are of the same successor size, then their resolutions satisfy a natural independence property on a club. In the framework of this paper, this is due to Jarden \cite{jarden-tameness-apal}. To give the reader a feeling for the difficulties encountered, we first explain in the proof how the (straightforward) first-order argument fails to generalize.

\begin{fact}\label{jarden-club}
  For every $\mu \ge \LS (\K)$, for every $M^0 \lea M^1$ both in $\K_{\mu^+}$, if $\seq{M_i^\ell : i < \mu^+}$ are increasing continuous resolutions of $M^\ell$ and all are limit models\footnote{And hence if $\mu > \LS (\K)$ are saturated (Fact \ref{uq-lim-fact}.(\ref{uq-lim})).} in $\K_\mu$, $\ell = 0, 1$, then the set of $i < \mu^+$ so that $\nfs{M_i^0}{M^0}{M_i^1}{M^1}$ is a club.
\end{fact}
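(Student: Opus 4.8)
The plan is to show separately that the set $C := \{i < \mu^+ : \nfs{M_i^0}{M^0}{M_i^1}{M^1}\}$ is closed and unbounded. Closure is the easy half and follows directly from full model continuity: if $\seq{i_j : j < \gamma}$ is increasing with supremum $i$ and each $i_j \in C$, then I apply full model continuity to the base chain $\seq{M_{i_j}^0 : j < \gamma}$, the constant left chain $\seq{M^0 : j < \gamma}$, the right chain $\seq{M_{i_j}^1 : j < \gamma}$, and the constant ambient chain $\seq{M^1 : j < \gamma}$. Its hypotheses hold since $M_{i_j}^0 \lea M^0 \lea M^1$, $M_{i_j}^0 \lea M_{i_j}^1 \lea M^1$, and $\nfs{M_{i_j}^0}{M^0}{M_{i_j}^1}{M^1}$ for each $j$; using continuity of the resolutions to identify the unions of the base and right chains with $M_i^0$ and $M_i^1$, I conclude $\nfs{M_i^0}{M^0}{M_i^1}{M^1}$, that is, $i \in C$.

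For unboundedness, I would first imitate the first-order argument and record where it breaks. By symmetry it suffices to establish $M_i^1 \nf_{M_i^0} M^0$ inside $M^1$, and by the left $(\le \LS (\K))$-witness property this reduces to showing, for every $A_0 \subseteq |M_i^1|$ and $B_0 \subseteq |M^0|$ with $|A_0| + |B_0| \le \LS (\K)$, that $\nfs{M_i^0}{A_0}{B_0}{M^1}$. Fixing such an $A_0$, local character applied to $\gtp (A_0 / M^0; M^1)$ (a type of length $\le \LS (\K)$) produces $M_\ast \lea M^0$ with $\|M_\ast\| \le \LS (\K)$ over which this type does not fork. Since $\|M_\ast\| \le \LS (\K) \le \mu < \mu^+ = \cf{\mu^+}$ and $M^0 = \bigcup_{\alpha < \mu^+} M_\alpha^0$ is continuous, $|M_\ast|$ is contained in some $|M_{\alpha (A_0)}^0|$, and coherence gives $M_\ast \lea M_{\alpha (A_0)}^0$; base monotonicity then yields $\nfs{M_i^0}{A_0}{B_0}{M^1}$ as soon as $\alpha (A_0) < i$.

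This suggests setting $f (\beta) := \sup \{\alpha (A_0) + 1 : A_0 \subseteq |M_\beta^1|,\ |A_0| \le \LS (\K)\}$ and letting $C$ contain the closure points of $f$. The main obstacle is exactly here: the witness property forces us to range over subsets $A_0$ of size up to $\LS (\K)$, of which there are $\mu^{\LS (\K)}$ inside a model of size $\mu$. Unless $\mu^{\LS (\K)} = \mu$, this supremum is taken over more than $\mu$ ordinals and may equal $\mu^+$, so $f$ need not be well-defined into $\mu^+$ and its closure-point set need not be club. This is the precise point where the finitary first-order argument fails to generalize: in the elementary setting finite character lets one range over only the $\mu$-many finite tuples of $M_\beta^1$, so no cardinal-arithmetic hypothesis is needed.

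To circumvent this I would not argue subset-by-subset but instead invoke the good-frame machinery. The restriction of $\is$ to types of length one over models of size $\mu$ is a good $\mu$-frame (as in the proof of Fact \ref{uq-lim-fact}), and Hypothesis \ref{good-hyp}.(\ref{existence-prop-hyp}) says it has the existence property for domination triples, equivalently (Remark \ref{dom-uq-triple}) for uniqueness triples — that is, the frame is weakly successful. This is exactly the input Shelah and Jarden use to define nonforking on models of size $\mu^+$ through a single transfinite construction along resolutions, rather than one $\LS (\K)$-subset at a time, and to prove that the resulting independence holds along resolutions on a club. Tameness (Fact \ref{fully-good-tame}) is then used to identify this frame-theoretic independence with $\is$-nonforking, which gives the club in the statement. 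Thus the real content is the weak successfulness of the $\mu$-frame supplied by the existence property for domination triples, and I would cite Jarden \cite{jarden-tameness-apal} to package the construction.
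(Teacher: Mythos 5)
Your endgame is the same as the paper's: after recording why the finitary first-order argument breaks down, you conclude by citing Jarden's NF-transfer theorem from \cite{jarden-tameness-apal}, whose key input (weak successfulness, i.e.\ the existence property for uniqueness triples) you extract from the existence property for domination triples via Remark \ref{dom-uq-triple}, together with tameness (Fact \ref{fully-good-tame}). Your closure argument via full model continuity is correct (it is the same device as Lemma \ref{uq-cont}, with two constant chains), and your diagnosis of where the witness-property argument fails is sound, though phrased in cardinal-arithmetic terms rather than the paper's formulation; the paper does not even separate closure from unboundedness, since Jarden's theorem delivers the whole club statement.

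There is, however, a genuine gap in how you handle $\mu > \LS (\K)$. You claim that the good $\mu$-frame obtained by restricting $\is$ to types of length one over models in $\K_\mu$ has the existence property for domination triples ``by Hypothesis \ref{good-hyp}.(\ref{existence-prop-hyp})'', but that hypothesis gives the existence property only at $\LS (\K)$, not at $\mu$. Worse, Jarden's theorem (his Hypothesis 6.5) also requires categoricity in $\mu$ and the conjugation property, and $\K$ is only assumed categorical in $\LS (\K)$: for $\mu > \LS (\K)$ the class $\K_\mu$ may contain non-saturated models, so neither categoricity in $\mu$ nor weak successfulness of the frame on $\K_\mu$ is available, and the citation of Jarden does not go through as you state it. The paper's fix is a reduction you are missing: prove the statement only for $\mu = \LS (\K)$, and for $\mu > \LS (\K)$ apply that case to the class $\Ksatp{\mu}$, which by Fact \ref{uq-lim-fact}.(\ref{chainsat}) is almost fully good with $\LS (\Ksatp{\mu}) = \mu$, is categorical in $\mu$ (all its models of size $\mu$ are saturated), and has the $\mu$-existence property for domination triples by Fact \ref{existence-dom-triples}. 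This reduction is legitimate precisely because the hypotheses of Fact \ref{jarden-club} require the resolutions to consist of limit, hence saturated, models, so the entire configuration lives inside $\Ksatp{\mu}$; without this step your argument covers only the case $\mu = \LS (\K)$.
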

\begin{proof}
  Let us first see how the first-order argument would go. By local character, for every $i < \mu^+$, there exists $j_i < \mu^+$ such that $\nfs{M_{j_i}^0}{M_i^1}{M^0}{M^1}$. Pick $i^\ast < \mu^+$ such that $j_i < i^\ast$ for every $i < i^\ast$. Using symmetry and the fact that forking is witnessed by a formula (this is where we use the first-order theory), it is then straightforward to see that $\nfs{M_{i^\ast}^0}{M^0}{M_{i^\ast}^1}{M^1}$. Thus $i^\ast$ has the desired property, and the argument shows we can find a closed unbounded subset of such $i^\ast$. Here however we do not have that forking is witnessed by a formula, or even a finite set (we only have the $\LS (\K)$-witness property, see Definition \ref{fully-good-def}.(\ref{witness-prop})).

  Full model continuity (Definition \ref{fully-good-def}.(\ref{full-model-cont})) seems to be the replacement we are looking for, but in the argument above we do \emph{not} have that $M_{j_i}^0 \lea M_i^1$ so cannot use it! It is open whether the appropriate generalization of full model continuity holds here.
  
  On to the actual proof. We show the result when $\mu = \LS (\K)$. Once this is done, if $\mu > \LS (\K)$ we can apply the ``$\mu = \LS (\K)$'' case to the class $\Ksatp{\mu}$ (by Fact \ref{uq-lim-fact}.(\ref{chainsat}) it is an almost fully good AEC and $\LS (\Ksatp{\mu}) = \mu$).

  We now want to apply \cite[Theorem 7.8]{jarden-tameness-apal}. The conclusion there is that for any model $M^0, M^1 \in \K_{\mu^+}$, $M^0 \lea_{\mu^+}^{\text{NF}} M^1$ if and only if $M^0 \lea M^1$, where $M^0 \lea_{\mu^+}^\text{NF} M^1$ is defined to hold if and only if there exists increasing continuous resolutions of $M^0$ and $M^1$ as here. Let us check that the hypotheses of \cite[Theorem 7.8]{jarden-tameness-apal} are satisfied. First, amalgamation in $\LS (\K)^+$ and $\LS (\K)$-tameness hold (by definition of an almost fully good AEC and Fact \ref{fully-good-tame}). Second, \cite[Hypothesis 6.5]{jarden-tameness-apal} holds: $\K$ is categorical in $\LS (\K)$, has a semi-good $\LS (\K)$-frame (this is weaker than the existence of an almost fully good independence relation, in fact the frame will be good), satisfies the conjugation property (by \cite[III.1.21]{shelahaecbook} which tells us that conjugation holds in any good $\LS (\K)$-frame categorical in $\LS (\K)$), and has the existence property for uniqueness triples by Hypothesis \ref{good-hyp}.(\ref{existence-prop-hyp}) and Remark \ref{dom-uq-triple}. Therefore the hypotheses of Jarden's theorem are satisfied so its conclusion holds.
\end{proof}

We can now generalize the proof of \cite[Claim III.4.3]{shelahaecbook} to limit cardinals. Roughly, it tells us that every nonalgebraic type over a saturated model has a resolution into domination triples.

\begin{lem}\label{canonical-prime}
  Let $\lambda > \LS (\K)$ and let $\delta := \cf{\lambda}$. Let $M^0 \in \K_\lambda$ be saturated and let $p \in \gS (M^0)$ be nonalgebraic. Then there exists a saturated $M^1 \in \K_\lambda$, an element $a \in |M^1|$, and increasing continuous resolutions $\seq{M_i^\ell : i \le \delta}$ of $M^\ell$, $\ell = 0,1$ such that for all $i < \delta$:

  \begin{enumerate}
    \item\label{clause-1} $p = \gtp (a / M^0; M^1)$.
    \item\label{clause-2} $a \in |M_0^1|$.
    \item\label{clause-3} $p$ does not fork over $M_0^0$.
    \item\label{clause-4} For $\ell = 0,1$, $M_i^\ell \in \K_{[\LS (\K), \lambda)}$ and $M_{i + 1}^\ell$ is limit over $M_i^\ell$.
    \item\label{clause-5} $(a, M_i^0, M_i^1)$ is a domination triple.
  \end{enumerate}
\end{lem}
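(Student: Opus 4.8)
The plan is to build the two resolutions $\seq{M_i^0 : i \le \delta}$ and $\seq{M_i^1 : i \le \delta}$ simultaneously by induction on $i \le \delta$, together with the element $a$, ensuring at each stage that $(a, M_i^0, M_i^1)$ is a domination triple, that the models lie in $\K_{[\LS(\K), \lambda)}$, and that the successor models are limit over their predecessors. Since $\lambda > \LS(\K)$ and $M^0$ is saturated of size $\lambda$ with $\cf{\lambda} = \delta$, I would first fix a continuous resolution of $M^0$ into limit (hence saturated when the size exceeds $\LS(\K)$) models of size $< \lambda$, arranging clause~(\ref{clause-4}) for $\ell = 0$ and, by local character (Definition~\ref{fully-good-def}), clause~(\ref{clause-3}), so that $p$ does not fork over $M_0^0$ for a suitably chosen base.

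The base case is where the existence property for domination triples enters: I would choose $M_0^0$ small enough that $p$ does not fork over it, and, using Hypothesis~\ref{good-hyp}.(\ref{existence-prop-hyp}) applied to the nonforking restriction $p \rest M_0^0$ (after arranging $\|M_0^0\| = \LS(\K)$ via the categoricity in $\LS(\K)$ assumption and Fact~\ref{uq-lim-fact}), produce a domination triple $(a, M_0^0, M_0^1)$ representing that restriction. The nonforking extension property (the weakening in Definition~\ref{fully-good-def}, which applies since $M_0^0$ has size $\LS(\K)$) then lets me realize $p$ itself, giving clauses~(\ref{clause-1}), (\ref{clause-2}), (\ref{clause-3}), and (\ref{clause-5}) at $i = 0$.

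At successor stages I would extend $(a, M_i^0, M_i^1)$ to $(a, M_{i+1}^0, M_{i+1}^1)$ by choosing $M_{i+1}^0$ limit over $M_i^0$, then invoking the existence property again (or a domination-triple extension argument) to find $M_{i+1}^1$ limit over $M_i^1$ and containing $M_{i+1}^0 \cup \{a\}$ so that $(a, M_{i+1}^0, M_{i+1}^1)$ remains a domination triple; I must keep all sizes strictly below $\lambda$, which is possible since $\delta = \cf{\lambda}$ and I can control the size increments. At limit stages $i < \delta$ I take unions and apply Lemma~\ref{uq-cont} to see that the union triple is again a domination triple, with continuity of the resolutions handled automatically. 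Finally, setting $M^1 := \bigcup_{i < \delta} M_i^1$ and $M_\delta^\ell := M^\ell$, the top triple is a domination triple over the saturated $M^0$; saturation of $M^1$ follows because it is a union of an increasing chain of limit (saturated) models, using Fact~\ref{uq-lim-fact}.(\ref{chainsat}) to guarantee that saturation is preserved under such unions.

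The main obstacle I anticipate is the successor step: propagating the domination-triple property from $M_i^\ell$ to $M_{i+1}^\ell$ while simultaneously keeping the resolutions increasing, continuous, and of controlled size below $\lambda$, and ensuring the \emph{same} element $a$ works at every level. The interaction between needing $a$ fixed throughout, needing each $M_{i+1}^\ell$ limit over $M_i^\ell$, and needing the domination property at each stage is where Fact~\ref{jarden-club} is likely to be the crucial tool, since it supplies the nonforking of the resolution pieces on a club and thereby lets me arrange that the chosen cofinal subsequence of indices simultaneously witnesses domination and the limit-model requirement. Verifying that the club from Fact~\ref{jarden-club} can be threaded through the inductive construction uniformly in $i$, rather than just at a single pair of levels, is the delicate point I expect to spend the most care on.
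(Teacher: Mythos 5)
Your construction has a genuine gap at the successor step, and it is exactly the step the paper's proof is designed to avoid. You propose to extend a domination triple $(a, M_i^0, M_i^1)$ to a domination triple $(a, M_{i+1}^0, M_{i+1}^1)$ with $M_{i+1}^1 \gea M_i^1$, ``invoking the existence property again (or a domination-triple extension argument).'' The existence property (Definition \ref{exist-triple-def}) only produces \emph{some} domination triple $(a'', M_{i+1}^0, N)$ realizing a given type over $M_{i+1}^0$; it gives no control whatsoever over the top model, so there is no way to arrange $N \gea M_i^1$ or even $a'' = a$ compatibly with the chain already built. Nor can you fix this by amalgamation: domination is destroyed when the top model is enlarged (a bigger $N'$ contains more elements that $a$ must dominate), so a model containing both $N$ and $M_i^1$ need not yield a domination triple, and no extension lemma for domination triples is available in this framework. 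Your fallback --- that Fact \ref{jarden-club} will ``thread a club through the induction'' --- also cannot work on your configuration: that fact requires two models of size $\mu^+$ resolved by increasing continuous chains of \emph{limit models of size $\mu$, of length $\mu^+$}, whereas your chain has length $\delta = \cf{\lambda}$ (possibly $\omega$) with model sizes growing cofinally to $\lambda$, so the fact simply does not apply to it.

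The paper's proof circumvents the successor-step problem by a ``catch your tail'' argument. It builds an auxiliary chain $\seq{N_i^\ell : i \le \lambda}$ of length $\lambda$ (not $\delta$) with slowly growing models ($N_i^\ell \in \K_{|i| + \LS(\K)}$), making \emph{no} attempt to force domination at successors; instead, at even stages where $(a, N_i^0, N_i^1)$ fails to be a domination triple, it deliberately incorporates witnesses of that failure, i.e.\ arranges $\nnfs{N_i^0}{N_i^1}{N_{i+1}^0}{N_{i+1}^1}$. Then, for each $\mu = |\alpha| + \LS(\K)$, Fact \ref{jarden-club} is applied to the initial segment at level $\mu^+$ of this long chain (this is where the existence property is actually used --- inside the proof of Fact \ref{jarden-club}, not directly in this lemma), producing a club $E_\mu$ of indices $i$ with $\nfs{N_i^0}{N_{\mu^+}^0}{N_i^1}{N_{\mu^+}^1}$; at such $i$ the triple \emph{must} be a domination triple, since otherwise the forking witnesses planted at stage $i+1$ would contradict this nonforking. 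Together with Lemma \ref{uq-cont} (giving closure), the set $C$ of domination indices is a club in $\lambda$, and the desired $\delta$-length resolutions are obtained by extracting a cofinal continuous subsequence from $C$; finally $N_\lambda^0$ is identified with $M^0$ via uniqueness of saturated models and uniqueness of nonforking extensions. So the domination triples are not constructed, they are \emph{found} on a club a posteriori --- that indirection is the content of the lemma, and your outline is missing it.
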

\begin{proof}
  For $\ell = 0,1$, we first choose by induction $\seq{N_i^\ell : i \le \lambda}$ increasing continuous and an element $a$ that will satisfy some weaker requirements. In the end, we will rename the $N_i^\ell$'s to get the desired $M_i^\ell$'s and $M^1$. We require that for all $i < \lambda$:

  \renewcommand{\theenumi}{\roman{enumi}}%
  \begin{enumerate}
    \item $N_0^0 \lea M^0$ and $p$ does not fork over $N_0^0$.
    \item $a \in |N_0^1|$ and $p \rest N_0^0 = \gtp (a / N_0^0; N_0^1)$.
    \item For $\ell = 0,1$, $N_i^\ell \in \K_{|i| + \LS (\K)}$ and $N_i^0 \lea N_i^1$.
    \item $\gtp (a / N_i^0; N_i^1)$ does not fork over $N_0^0$.
    \item If $i$ is odd, and $\ell = 0,1$, then $N_{i + 1}^\ell$ is limit over $N_i^\ell$.
    \item If $i$ is even and $(a, N_i^0, N_i^1)$ is \emph{not} a domination triple, then $\nnfs{N_i^0}{N_{i}^1}{N_{i + 1}^0}{N_{i + 1}^1}$.
  \end{enumerate}

  \paragraph{\underline{This is possible}}

  First pick $N_0^0 \in \K_{\LS (\K)}$ such that $N _0^0 \lea M^0$ and $p$ does not fork over $N_0^0$. This is possible by local character. Now pick $N_0^1 \in \K_{\LS (\K)}$ such that $N_0^0 \lea N_0^1$ and there is $a \in |N_0^1|$ with $\gtp (a / N_0^0; N_0^1) = p \rest N_0^0$. This takes care of the case $i = 0$. For $i$ limit, take unions. Now assume that $i = j + 1$ is a successor. We consider several cases: \begin{itemize}
  \item If $j$ is even and $(a, N_j^0, N_j^1)$ is \emph{not} a domination triple, then there must exist witnesses $N_{j + 1}^0, N_{j + 1}^1 \in \K_{\LS (\K) + |j|}$ such that $N_j^0 \lea N_{j + 1}^0$, $N_{j + 1}^0 \lea N_{j + 1}^1$, $N_{j}^1 \lea N_{j + 1}^1$, $\nfs{N_j^0}{a}{N_{j + 1}^0}{N_{j + 1}^1}$ but $\nnfs{N_j^0}{N_j^1}{N_{j + 1}^0}{N_{j + 1}^1}$. This satisfies all the conditions (we know that $\gtp (a / N_j^0; N_j^1)$ does not fork over $N_0^0$, so by transitivity also $\gtp (a / N_{j + 1}^0; N_{j + 1}^1)$ does not fork over $N_0^0$).
  \item If $j$ is even and $(a, N_j^0, N_j^1)$ \emph{is} a domination triple, take $N_{j + 1}^\ell := N_j^\ell$, for $\ell = 0,1$.
  \item If $j$ is odd, pick $N_i^0 \in \K_{\LS (\K) + |j|}$ limit over $N_j^0$ and $N_i^1$ limit over $N_i^0$ and $N_j^1$ so that $\gtp (a / N_i^0; N_i^1)$ does not fork over $N_0^0$. This is possible by the extension property for types of length one.
  \end{itemize}

  \paragraph{\underline{This is enough}}

  By the odd stages of the construction, and basic properties of universality, for all $i < \lambda$, $\ell = 0,1$, $N_{i + 2}^\ell$ is universal over $N_i^\ell$. Thus for $\ell = 0,1$ and $i \le \lambda$ a limit ordinal, $N_i^\ell$ is limit. In particular, by Fact \ref{uq-lim-fact}.(\ref{uq-lim}), $N_\lambda^\ell$ is saturated. By uniqueness of saturated models, $N_\lambda^0 \cong_{N_0^0} M^0$. By uniqueness of the nonforking extension, without loss of generality $N_\lambda^0 = M^0$. Now let $C$ be the set of \emph{limit} $i < \lambda$ such that $(a, N_i^0, N_i^1)$ is a domination triple. We claim that $C$ is a club:

  \begin{itemize}
    \item $C$ is closed by Lemma \ref{uq-cont}.
    \item $C$ is unbounded: given $\alpha < \lambda$, let $\mu := |\alpha| + \LS (\K)$. Let $E_\mu$ be the set of $i < \mu^+$ such that $i$ is limit and $\nfs{N_i^0}{N_{\mu^+}^0}{N_i^1}{N_{\mu^+}^1}$. By Fact \ref{jarden-club}, $E_\mu$ is a club. The even stages of the construction imply that for $i \in E_\mu$, $(a, N_i^0, N_i^1)$ is a domination triple. In other words, $E_\mu \subseteq C$. Now pick $\beta \in E_\mu \backslash (\alpha + 1)$. We have that $\alpha < \beta$ and $\beta \in E_\mu \subseteq C$. This completes the proof that $C$ is unbounded.
  \end{itemize}
      
  Let $\seq{\alpha_i : i < \delta}$ (recall that $\delta = \cf{\lambda}$) be a cofinal strictly increasing continuous sequence of elements of $C$. For $i < \delta$, $\ell = 0,1$, let $M_i^\ell := N_{\alpha_i}^\ell$ and let $M^1 := M_\lambda^1$ (note that $M^1$ is saturated by what has been observed above). This works: Clauses (\ref{clause-1}), (\ref{clause-2}), (\ref{clause-3}) are straightforward to check using the monotonicity and uniqueness properties of forking. Clause (\ref{clause-5}) holds by definition of $C$. As for (\ref{clause-4}), we have observed above that for $\ell = 0,1$, for all $i < \lambda$, $N_{i + 2}^\ell$ is universal over $N_i^\ell$. Hence for all limit ordinals $i < j < \lambda$, $N_j^\ell$ is limit over $N_i^\ell$. In particular because $C$ contains only limit ordinals, for all $i < \delta$, $N_{\alpha_{i + 1}}^\ell$ is limit over $N_{\alpha_i}^\ell$, as desired.
\end{proof}

In \cite[Claim III.4.9]{shelahaecbook}, Shelah observes that triples as in the conclusion of Lemma \ref{canonical-prime} are prime triples. For the convenience of the reader, we include the proof here. We will use the following fact which follows from the uniqueness property of forking and some renaming.

\begin{fact}[Lemma 12.6 in \cite{indep-aec-v6-toappear}]\label{uq-nf}
  For $\ell < 2$, $i < 4$, let $M_i^\ell \in \K$ be such that for $i = 1,2$, $M_0^\ell \lea M_i^\ell \lea M_3^\ell$.

  If $\nfs{M_0^\ell}{M_1^\ell}{M_2^\ell}{M_3^\ell}$ for $\ell < 2$, $f_i : M_i^1 \cong M_i^2$ for $i = 0, 1, 2$, and $f_0 \subseteq f_1$, $f_0 \subseteq f_2$, then $f_1 \cup f_2$ can be extended to $f_3 : M_3^1 \rightarrow M_4^2$, for some $M_4^2$ with $M_3^2 \lea M_4^2$.  
\end{fact}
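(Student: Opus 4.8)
The plan is to derive Fact~\ref{uq-nf} from invariance, symmetry, and above all the uniqueness property of $\is$, closing with a renaming step that guarantees the codomain genuinely extends the prescribed $M_3^2$ rather than an isomorphic copy of it. Since $\K$ has amalgamation I will amalgamate freely. First I would transport the source system across $f_1$: extend $f_1 : M_1^1 \cong M_1^2$ to an isomorphism $g : M_3^1 \cong N$ onto a fresh model $N$ (possible since $M_1^1 \lea M_3^1$, by taking an isomorphic copy of $M_3^1$ over the image $M_1^2$). Then $g \supseteq f_1 \supseteq f_0$, so $g(M_0^1) = M_0^2$ and $M_1^2 = g(M_1^1) \lea N$, and by invariance $\nfs{M_0^2}{M_1^2}{g(M_2^1)}{N}$.

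Next I would compare the two copies of the right-hand side that now sit over $M_1^2$. The composite $g \circ f_2^{-1} : M_2^2 \cong g(M_2^1)$ is an isomorphism fixing $M_0^2$ pointwise: on $M_0^2$ it equals $f_0 \circ f_0^{-1} = \id$, since $f_2 \supseteq f_0$ and $g \supseteq f_1 \supseteq f_0$. Enumerate $M_2^2$ by $\bar b$ and put $\bar c := g(f_2^{-1}(\bar b))$, an enumeration of $g(M_2^1)$. By symmetry applied to the two independence statements, both $\gtp(\bar b / M_1^2; M_3^2)$ and $\gtp(\bar c / M_1^2; N)$ do not fork over $M_0^2$; and because $g \circ f_2^{-1}$ fixes $M_0^2$ and sends $\bar b$ to $\bar c$, these two long types have the same restriction to $M_0^2$. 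Applying the uniqueness property of $\is$ (with base $M_0^2$ and domain $M_1^2$, to types in $\gS^{<\infty}$) forces $\gtp(\bar b / M_1^2; M_3^2) = \gtp(\bar c / M_1^2; N)$.

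Finally I would unwind this equality of Galois types. By definition of Galois type equality together with a renaming of the ambient amalgam, there is $M_4^2 \gea M_3^2$ and a $\K$-embedding $p : N \rightarrow M_4^2$ fixing $M_1^2$ with $p(\bar c) = \bar b$; concretely, for $m \in M_2^1$ this reads $p(g(m)) = f_2(m)$, so $p \circ g \rest M_2^1 = f_2$. Setting $f_3 := p \circ g : M_3^1 \rightarrow M_4^2$ completes the argument: on $M_1^1$ we get $f_3 = p \circ f_1 = f_1$ (as $p$ fixes $M_1^2 = g(M_1^1)$), and on $M_2^1$ we get $f_3 = f_2$ by construction, so $f_3$ extends $f_1 \cup f_2$ while $M_3^2 \lea M_4^2$, exactly as required.

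The step I expect to be the main obstacle is this last one: converting the abstract equality of Galois types into a concrete embedding whose codomain \emph{literally} extends the given $M_3^2$ and which simultaneously restricts to the prescribed $f_1$ and $f_2$. This is the ``renaming'' alluded to in the statement, since the type equality a priori only yields \emph{some} amalgam with matching images; one must arrange the amalgamation so that $M_3^2$ sits inside it via the identity and then check compatibility of $p \circ g$ with $f_1$ and $f_2$ separately (using, respectively, that $p$ fixes $M_1^2$ and the defining equation $p(\bar c) = \bar b$). A secondary technical point is that symmetry and uniqueness are invoked for types of length $\|M_2^2\|$, which is legitimate precisely because $\is$ is a $(<\infty, \ge \LS(K))$-independence relation.
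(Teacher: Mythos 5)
Your proof is correct, and it takes exactly the route the paper indicates: the paper does not prove Fact \ref{uq-nf} itself but cites it as Lemma 12.6 of \cite{indep-aec-v6-toappear}, remarking only that it ``follows from the uniqueness property of forking and some renaming.'' That is precisely your argument --- transport one side by an isomorphic copy, use symmetry to turn the two nonforking amalgams into two nonforking types over $M_1^2$ with equal restrictions to $M_0^2$, apply uniqueness in $\gS^{<\infty}$, and then rename the resulting amalgam so that $M_3^2$ embeds via the identity while checking compatibility with $f_1$ and $f_2$.
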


We can now give a proof of Theorem \ref{abstract-thm-0} from the abstract. For the convenience of the reader we restate Hypothesis \ref{good-hyp} here.

\begin{thm}\label{prime-sat}
  Let $\K$ be an almost fully good AEC that is categorical in $\LS (\K)$ and has the $\LS (\K)$-existence property for domination triples.
  
  For any $\lambda > \LS (\K)$, $\Ksatp{\lambda}_{\lambda}$ has primes (see Definition \ref{prime-def}). That is, for any saturated $M \in \K_\lambda$ and any nonalgebraic $p \in \gS (M)$, there exists a triple $(a, M, N)$ such that $M \lea N$, $N \in \K_\lambda$ is saturated, $p = \gtp (a / M; N)$, and whenever $p = \gtp (b / M; N')$ with $N' \in \K_\lambda$ saturated, there exists $f: N \xrightarrow[M]{} N'$ such that $f (a) = b$.
\end{thm}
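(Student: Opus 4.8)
The plan is to apply Lemma~\ref{canonical-prime} to obtain the candidate triple, and then verify that it satisfies the definition of a prime triple (Definition~\ref{prime-def}) by a back-and-forth construction along the resolutions. First I would invoke Lemma~\ref{canonical-prime} with the given saturated $M^0 = M \in \K_\lambda$ and nonalgebraic $p \in \gS(M)$ to produce a saturated $M^1 =: N \in \K_\lambda$, an element $a \in |N|$, and increasing continuous resolutions $\seq{M_i^\ell : i \le \delta}$ (where $\delta = \cf{\lambda}$) of $M^\ell$, $\ell = 0,1$, satisfying clauses (\ref{clause-1})--(\ref{clause-5}). By clause (\ref{clause-1}), $p = \gtp(a/M; N)$, so this triple represents $p$; it remains to show $(a, M, N)$ is prime among saturated models of size $\lambda$.

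So suppose $N' \in \K_\lambda$ is saturated and $b \in |N'|$ with $\gtp(b / M; N') = p = \gtp(a/M; N)$. I need to build $f : N \xrightarrow[M]{} N'$ with $f(a) = b$. The idea is to construct $f$ as the union of an increasing chain $\seq{f_i : i \le \delta}$, where each $f_i$ maps $M_i^1$ into $N'$, fixes $M_i^0$ (viewed inside $M$, hence inside $N'$), and sends $a$ to $b$. At the base stage $i = 0$, since $a \in |M_0^1|$ by clause (\ref{clause-2}) and $\gtp(a/M_0^0; M_0^1) = \gtp(b/M_0^0; N')$ (this follows from clause (\ref{clause-3}) together with the uniqueness property of forking, since both types do not fork over $M_0^0$ and agree there), I can find $f_0 : M_0^1 \xrightarrow[M_0^0]{} N'$ with $f_0(a) = b$ using the definition of Galois types and amalgamation inside the monster. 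At successor stages, the key point is that by clause (\ref{clause-5}) the triple $(a, M_i^0, M_i^1)$ is a domination triple, so $M_i^1$ is dominated by $a$ over $M_i^0$; combined with clause (\ref{clause-4}) (the resolution is continuous and each step adds a limit extension), I would use Fact~\ref{uq-nf} to extend $f_i$ to $f_{i+1}$. Concretely, the independence statement needed to invoke Fact~\ref{uq-nf} — namely $\nfs{M_i^0}{M_i^1}{M_{i+1}^0}{M_{i+1}^1}$ — follows from the domination property of the triple $(a, M_i^0, M_i^1)$ once we know that $a$ is independent from $M_{i+1}^0$ over $M_i^0$, which in turn comes from the nonforking of the relevant type (clause (\ref{clause-3}) and transitivity). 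At limit stages I take unions, using continuity of the resolution.

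The main obstacle I expect is managing the domination-to-nonforking passage cleanly at each successor step and ensuring the hypotheses of Fact~\ref{uq-nf} (in particular the required nonforking square and the compatibility $f_0 \subseteq f_1, f_0 \subseteq f_2$ of partial isomorphisms) are genuinely met. The subtlety is that Fact~\ref{uq-nf} lets us extend an isomorphism across a nonforking diagram only into \emph{some} extension $M_4^2$ of the target, so I must arrange that the image stays inside the saturated $N'$ rather than in some larger model. This is where saturation of $N'$ does the work: because $N'$ is $\lambda$-saturated and each $M_i^1$ has size $<\lambda$, I can always absorb the extension $M_4^2$ back into $N'$ by a further embedding fixing the relevant base, i.e.\ I can realize the required types inside $N'$ itself. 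Keeping track of these embeddings coherently across the chain, so that the $f_i$ are genuinely increasing and the final union $f = \bigcup_{i \le \delta} f_i$ is a single $\K$-embedding of $N = M_\delta^1$ into $N'$ fixing $M = M_\delta^0$ and sending $a$ to $b$, is the delicate bookkeeping that constitutes the heart of the argument.
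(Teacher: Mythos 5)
Your proposal follows essentially the same route as the paper's own proof: invoke Lemma~\ref{canonical-prime}, build an increasing chain of embeddings $\seq{f_i : i \le \delta}$ along the resolutions, use the domination triples of clause~(\ref{clause-5}) together with the nonforking from clause~(\ref{clause-3}) to produce the two nonforking squares needed for Fact~\ref{uq-nf}, and use saturation (model-homogeneity) of $N'$ to absorb the resulting extension back into $N'$ at each successor step. The only cosmetic differences are at the base stage, where the type equality over $M_0^0$ follows directly by restriction of Galois types (no appeal to uniqueness of forking is needed) and where saturation of $N'$, not just amalgamation, is what yields $f_0$ with image inside $N'$ --- the same absorption mechanism you correctly identify for the successor steps.
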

\begin{proof}
  Let $M \in \K_\lambda$ be saturated and let $p \in \gS (M)$ be nonalgebraic. We must find a triple $(a, M, N)$ such that $M \lea N$, $N \in \K_\lambda$ is saturated, $p = \gtp (a / M; N)$, and $(a, M, N)$ is a prime triple among the saturated models of size $\lambda$.

  Set $M^0 := M$ and let $\delta := \cf{\lambda}$. Let $M^1$, $a$, $\seq{M_i^\ell : i \le \delta}$ be as described by the statement of Lemma \ref{canonical-prime}. Recall (this is key) that $\|M_i^\ell\| < \lambda$ for any $i < \delta$. We show that $(a, M^0, M^1)$ is as desired. By assumption, $M^0 \lea M^1$, $p = \gtp (a / M^0; M^1)$, and $M^1 \in \K_\lambda$ is saturated. It remains to show that $(a, M^0, M^1)$ is a prime triple in $\Ksatp{\lambda}_\lambda$. Let $M' \in \Ksatp{\lambda}_\lambda$, $a' \in |M'|$ be given such that $\gtp (a' / M^0; M') = \gtp (a / M^0; M^1)$. We want to build $f: M^1 \xrightarrow[M^0]{} M'$ so that $f (a) = a'$.

  We build by induction an increasing continuous chain of embeddings $\seq{f_i : i \le \delta}$ so that for all $i \le \delta$:

  \begin{enumerate}
    \item $f_i : M_i^1 \xrightarrow[M_i^0]{} M'$.
    \item $f_i (a) = a'$.
  \end{enumerate}

  This is enough since then $f := f_\delta$ is as required. This is possible: for $i = 0$, we use that $M'$ is saturated, hence realizes $p \rest M_0^0$, so there exists $f_0: M_0^1 \xrightarrow[M_0^0]{} M'$ witnessing it, i.e.\ $f_0 (a) = a'$. At limits, we take unions. For $i = j + 1$ successor, let $\mu := \|M_j^1\| + \|M_i^0\|$. Pick $N_j \lea M'$ with $N_j \in \K_\mu$ and $N_j$ containing both $f_j[M_j^1]$ and $M_i^0$.

  By assumption, $p$ does not fork over $M_0^0$ and by assumption $p = \gtp (a' / M^0; M')$, so by monotonicity of forking, $\nfs{M_j^0}{a'}{M_{j + 1}^0}{N_j}$. We know that $(a, M_j^0, M_j^1)$ is a domination triple, hence applying $f_j$ and using invariance, $(a', M_j^0, f_j[M_j^1])$ is a domination triple. Therefore $\nfs{M_j^0}{f_j[M_j^1]}{M_{j + 1}^0}{N_j}$. By a similar argument, we also have $\nfs{M_j^0}{M_j^1}{M_{j + 1}^0}{M_{j + 1}^1}$. By Fact \ref{uq-nf}, the map $f_j \cup \text{id}_{M_{j + 1}^0}$ can be extended to a $\K$-embedding $g: M_{j + 1}^\alpha \rightarrow N_j'$ for some $N_j' \gea N_j$ of size $\mu$. Since $\mu < \lambda$ and $M'$ is saturated, there exists $h: N_j' \xrightarrow[N_j]{} M'$. Let $f_{j + 1} := h \circ g$.
\end{proof}

\bibliographystyle{amsalpha}
\bibliography{primes-over-saturated}

\end{document}